\newtheorem{theorem}{Theorem}[section]
\newtheorem{lemma}[theorem]{Lemma}
\newtheorem{proposition}[theorem]{Proposition}
\newtheorem{corollary}[theorem]{Corollary}
\theoremstyle{definition}
\newtheorem{definition}[theorem]{Definition}
\newtheorem{example}[theorem]{Example}
\theoremstyle{remark}
\newtheorem{remark}[theorem]{Remark}
\numberwithin{equation}{section}
\newcommand{\abs}[1]{\left\lvert#1\right\rvert}
\newcommand{\N}{\mathbb{N}} 
\newcommand{\F}{\mathcal{F}}
\newcommand{\Lin}{\mathcal{L}}
\begin{document}

\setcounter{page}{1}

\title{Equality in Degrees of Compactness: Schauder's Theorem and s-numbers}

\begin{center}
\author[A. G. AKSOY, D. A. THIONG ]{Asuman G\"{u}ven AKSOY, Daniel Akech Thiong}
\end{center}

\address{$^{*}$Department of Mathematics, Claremont McKenna College, 850 Columbia Avenue, Claremont, CA  91711, USA.}
\email{\textcolor[rgb]{0.00,0.00,0.84}{aaksoy@cmc.edu }}

\address{$^{1}$Department of Mathematics, Claremont Graduate University, 710 N. College Avenue, Claremont, CA  91711, USA.}
\email{\textcolor[rgb]{0.00,0.00,0.84}{daniel.akech@cgu.edu}}

\subjclass[2010]{Primary 47A16, 47B10; Secondary 47A68}

\keywords{s-numbers, approximation schemes, Schauder's theorem }


\begin{abstract} 
We investigate an extension of Schauder's theorem by studying the relationship between various $s$-numbers of an operator $T$ and its adjoint $T^*$. We have three main results. First, we present a new proof that the approximation number of $T$ and $T^*$ are equal for compact operators. Second, for non-compact, bounded linear operators from $X$ to $Y$, we obtain a relationship between certain $s$-numbers of $T$ and $T^*$ under natural conditions on $X$ and $Y$. Lastly, for non-compact operators that are compact with respect to certain approximation schemes, we prove results for comparing the degree of compactness of $T$ with that of its adjoint $T^*$.

\end{abstract} \maketitle

\section{Introduction}
In the following, we give a brief review of the background, notation, and terminology that will be relevant to this paper. 
Let $\mathcal{L}(X,Y)$ denote the normed vector space of all continuous operators from $X$ to $Y$, $X^*$ be the dual space of $X$, and $\mathcal{K}(X,Y)$ denote the collection of all compact operators from $X$ to $Y$. Denote by 
$T^{*} \in \mathcal{L}(Y^{*}, X^{*} )$ the adjoint operator of $T\in \Lin (X, Y)$. The well known theorem of Schauder states that $T \in \mathcal{K}(X,Y)$  if and only if $T^{*} \in \mathcal{K}(Y^{*},X^{*})$.  The proof of Schauder's theorem that uses Arzel$\grave{a}$-Ascoli Theorem is presented in most textbooks on functional analysis (see, e.g.,  \cite {Rudin}).  A new and simple proof which does not depend on Arzel$\grave{a}$-Ascoli can be found in \cite{Runde}. Recalling the fact that a class of operators $\mathcal{A}(X,Y)\subset \mathcal{L}(X,Y)$ is called \textit{symmetric} if $T \in \mathcal{A}(X,Y)$ implies $T^{*} \in \mathcal{A}(Y^{*},X^{*})$, we note that  Schauder's Theorem assures that the class $\mathcal{K}(X, Y)$ of compact operators between arbitrary Banach spaces $X$ and $Y$ is  a symmetric  operator ideal  in $\mathcal{L}(X, Y)$.  

In \cite{Rie} F. Riesz proved compact operators have at most countable set of eigenvalues $\lambda_{n}(T)$, which arranged in a sequence, tend to zero. This result raises the question of what are the conditions on  $T\in \Lin (X, Y)$ such that $(\lambda_{n}(T)) \in \ell_{q}$? Specifically, what is the rate of convergence to zero of the sequence $(\lambda_{n}(T))$? To answer these questions, in \cite{PieID} and \cite{Pieeig}, A. Pietsch developed $s$-numbers $s_{n}(T)$ (closely related to singular values), which characterize the degree of compactness of $T$. The concept of \textit{s-numbers}  $s_n(T)$  is introduced axiomatically in \cite{PieID},  their relationship to eigenvalues  are given in detail in \cite{Pieeig}. 

\begin{definition}
A map  which assigns to every operator $T$  a scalar sequence, is said to be a  $s$-function  if the following conditions are satisfied:
\begin{enumerate}
\item $||T|| =s_1(T) \geq s_2(T) \geq \dots \geq 0$ for $T \in \mathcal{L}(X,Y)$.
\item $s_{m+n-1} (S+T) \leq s_m(T)+s_n(T) $ for $S,T \in \mathcal{L}(X,Y)$ .
\item $s_n(RTK) \leq ||R|| s_n(T) ||K|| $ \\for  $K \in  \mathcal{L}(X_0,X)$,   $T \in \mathcal{L}(X,Y)$,  $R \in \mathcal{L}(Y,Y_o)$.
\item If rank $(T) < n$, then $s_n(T) =0$.
\item $s_n(I_n)= 1$ where $I_n$ is the identity map of $\ell_2^n$.
\end{enumerate}
We call $s_n(T)$ the n-th \textit{$s$-number} of the operator $T$. Observe that $s_n(T)$ depends on $T$ continuously since $$ |s_n(S)-s_n(T)| \leq ||S-T || .$$
\end{definition}
In \cite{PieID} it is shown that there is only one $s$-function on the class of all operators between Hilbert spaces.  For example, if we let  $T$ be a diagonal operator acting on $\ell_2$ such that $$ T(x_n)= (\lambda_n x_n), \quad\mbox{where} \,\,\, \lambda_1 \geq \lambda_2 \geq \dots \geq 0\quad \mbox{then}\,\,\, s_n(T) =\lambda_n $$
for every $s$-function. 

However for Banach spaces
 there are several possibilities  of  assigning  to every operator $T: X \to Y$ a certain sequence of numbers $\{s_n(T)\}$ 
 which characterizes the degree of approximability or compactness of $T$. The main examples of s-numbers to be used in this paper are approximation numbers, Kolmogorov numbers, Gelfand numbers  and symmetrized approximation numbers which  are all  defined below. 


First, for two arbitrary normed spaces $X$ and $Y$, we define the collection of the finite-rank operators as follows: $$\F_n(X, Y) = \{A \in \mathcal{L}(X, Y): \text{rank} (A) \leq n  \}, \quad\mbox{and} \quad  \F(X, Y) =\bigcup _{n=0}^{\infty} \F_n(X, Y) $$ which forms the smallest ideal of operators that exists. 

\begin{definition} In the following we define the s-numbers we will use.
\begin{enumerate}
\item The \textit{nth approximation number} $$a_{n}(T) = \inf\{||T - A||: A \in \F_n(X, Y)\},\quad n=0,1,\dots$$
Note that $a_{n}(T)$ provides a measure of how well T can be approximated by finite mappings whose range is at most n-dimensional.  It is clear that the sequence $\{a_n(T)\}$ is monotone decreasing and $\displaystyle \lim_{n \to \infty} a_n(T) = 0$  if and only if $T$ is the limit of finite rank operators.   It is known that the largest $s$-number is the approximation number. This is so because $a: S\to (a_n(S))$ is an $s$-function and if we consider $S\in \mathcal{L} (X,Y)$ and  if $L\in \mathcal{F}(X,Y)$ with $\text{rank}(L)< n$, then 
$$ s_n(S) \leq s_n(L)+ || S-L||=||S-L||.$$ Therefore $s_n(S) \leq a_n(S)$. See \cite{CS} or \cite {PieID}  for more details.

\item The \textit{nth  Kolmogorov diameter} of $T \in \mathcal{L}(X)$ is defined by $$\delta_{n}(T) = \inf \{||Q_{G} T||: \dim G \leq n \}$$ where the infimum is over all subspaces $G \subset X$  such that $\dim G \leq n$ and $Q_{G}$ denotes the canonical quotient map $Q_{G}: X \rightarrow X/G$. \\
\item The \textit{nth Gelfand number of }$T$, $c_{n} (T)$ is defined as: $$c_{n}(T) = \inf \{ \epsilon > 0: ||Tx|| \leq \sup_{1 \leq i \leq k} | \langle x, a_{i} \rangle |+ \epsilon ||x|| \}$$
 where $  a_{i} \in X^{*}, 1 \leq i \leq k \text { with }  k < n  $. 
It follows that an operator $T$ is compact if and only if $c_{n}(T) \to 0$ as $n \to \infty$.  


\item The \textit{nth symmetrized approximation number} $\tau_{n}(T) $ for any operator $T$  defined between arbitrary Banach spaces $X$ and $Y$ is defined as follows: $$\tau_{n}(T) = \delta_{n}(J_{Y} T)\quad \mbox 
{where} \quad  J_{Y}: Y \to \ell_{\infty} (B_{Y^{*}})$$ is an embedding map.
Note that above definition is equivalent to $$\tau_{n}(T)  = a_{n}(J_{Y}TQ_{X})$$ as well as to  $$\tau_{n}(T)  =c_{n}(TQ_{X}), $$ where $Q_{X}: \ell_{1}(B_{X}) \rightarrow X$   is a metric surjection onto $X$ given by
$Q_{X}( \xi_{x})= \sum_{B_{X}} \xi_x x$  \,\, for\,\,  $(\xi_{x}) \in \ell_1(B_{X})$ . 

\end{enumerate}
\end{definition} 
It is possible to compare various s-numbers such as $a_{n}(T), \, \delta_{n}(T),\,  c_{n}(T)$ if one imposes some mild restrictions on $X$ and $Y$. With this purpose in mind we define  well known concepts of lifting and extension properties. 

\begin{definition}    In the following we introduce two well-known important  properties of  Banach spaces. See \cite{CS} for details.
\begin{enumerate}

\item We say that a Banach space $X$ has \textit{the lifting property} if for every $T\in \mathcal{L}(X, Y/F)$ and every $\epsilon >0$ there exists an operator $S \in \mathcal{L}(X, Y)$ such that $$||S|| \leq (1 + \epsilon) ||T||\quad\mbox{and}\,\,\,  T = Q_{F}S,$$ where $F$ is a closed subspace of the Banach space $Y$ and $Q_{F}: Y \rightarrow Y/ F$ denotes the canonical projection. 
\begin{example} The Banach space $\ell_{1} (\Gamma)$ of \textit{summable number families} $\{ \lambda_{\gamma } \}_{\gamma \in \Gamma} $ over an arbitrary index set $\Gamma$, whose elements $\{ \lambda_{\gamma } \}_{\gamma \in \Gamma} $  are characterized by $\sum_{\gamma \in \Gamma} | \lambda_{\gamma }| < \infty$, has the metric lifting property. 
\end{example}

\item A Banach space $Y$ is said to have \textit{the extension property} if for each $T \in \mathcal{L}(M, Y)$ there exists an operator $S \in \mathcal{L}(X, Y)$ such that $T = SJ_{M}$ and $||T|| = ||S||$, where $M$ is a closed subspace of an arbitrary Banach space $X$ and $J_{M}: M \rightarrow Y$  is the canonical injection. 
\begin{example}The Banach space $\ell_{\infty} (\Gamma)$ of \textit{bounded number families} $\{ \lambda_{\gamma } \}_{\gamma \in \Gamma} $  over an arbitrary index set $\Gamma$ has the metric extension property. 
\end{example}
\end{enumerate}
\end{definition} 

We mention a couple of facts to illustrate the importance of lifting and extensions properties with respect to $s$-numbers. If $T$ is any map from a Banach space with metric lifting property to an arbitrary Banach space, then $a_{n}(T) = \delta_{n} (T)$ (\cite{CS}, Prop. $2.2.3)$. It is also known that every Banach space $X$ appears as a quotient space of an appropriate space $\ell_{1} (\Gamma)$ (see \cite{CS}, p.$52$).  Furthermore, If $T$ is any map from an arbitrary Banach space into a Banach space with metric extension property, then $a_{n}(T) = c_{n} (T)$ (\cite{CS}, Prop. $2.3.3$). Additionally, every Banach space $Y$ can be regarded as a subspace of an appropriate space $\ell_{\infty} (\Gamma)$ (see \cite{CS}, p.$ 60$). 

For non-compact operator $T \in \mathcal{L}(X,Y)$, we do not have too much  information about the relationship between $s_{n}(T)$ with $s_{n}(T^{*})$. In this paper,  by imposing certain natural conditions on $X$ and $Y$ we are able to obtain a relationship between  $s_{n}(T)$ with $s_{n}(T^{*})$ for certain s-numbers.  Moreover,  using a new characterization of compactness due to Runde \cite{Runde} together with the Principle of Local Reflexivity, we give a different, simpler proof of Hutton's theorem \cite{Hut} establishing that  for any compact map $T$, $$a_n(T) =a_n(T^*) \quad\mbox{for all} \,\,\,n.$$ Next we consider operators which are not compact but compact with respect to certain approximation schemes Q.  We call such operators as  Q-compact and   prove that  for any  Q-compact operator $T$, one has $\tau_{n}(T) =  \tau_{n}(T^{*})$. This result answers the question of comparing the degree of compactness for $T$ and its adjoint $T^{*}$ for   non-compact operators $T$. 
\section{ Comparing $s_n(T)$ and $s_n(T^*)$}


 Hutton in  \cite{Hut} used the Principle of Local Reflexivity  (PLR)  to prove  that for $T \in \mathcal{K}(X,Y)$  we have $$ a_{n}(T) = a_{n} (T^{*})\quad\mbox{ for all} \,\,\, n.$$ This result fails for non-compact operators. For example, if $T = I: \ell_{1} \rightarrow c_{0}$ is the canonical injection and $T^{*}: \ell_{1} \rightarrow \ell_{\infty}$  is the natural injection, then one can show  $$1 = a_{n}(T) \neq a_{n}(T^{*}) = \frac{1}{2}.$$

On the other hand by considering the ball measure of non-compactness, namely,  $$\gamma (T) := \inf \{r > 0: T(B_{X}) \subset \bigcup_{k =1}^{n} A_{k}, \,\, \displaystyle \text{max}_{1\leq k \leq n} \text{ diam } (A_{k}) < r, \, n\in \mathbb{N} \} $$ Astala  in  \cite{As} proved that if $T \in \mathcal{L}(X,Y)$, where X and Y are arbitrary Banach spaces with metric lifting and extension property, respectively, then $$\gamma (T) = \gamma(T^{*}).$$ 
Our first result is  a  different,  simpler proof  of Hutton's Theorem. We use only  the characterization of compactness by Runde \cite{Runde}, together with the Principle of Local Reflexivity.  Lindenstrass and Rosenthal \cite{LR}  discovered a principle that shows that all Banach spaces are ``locally reflexive" or said in another way,  every bidual $X^{**}$ is finitely representable in the original space $X$. The following  is a stronger version of this property called \textit{Principle of Local Reflexivity} (PLR) due to Johnson, Rosenthal and Zippin \cite{JRZ}:

\begin{definition}
Let $X$ be  a Banach space regarded  as a subspace of $X^{**}$, let $E$ and $F$  be finite dimensional subspaces of $X^{**}$ and $X^*$ respectively and  let $\epsilon >0$. Then there exist a one-to-one operator $T: E \to X$ such that
\begin{enumerate}
\item $T(x)= x$  for all $x\in X \cap E$
\item $f(Te)=e(f)$ for all $e\in E$ and $f\in F$
\item $||T|| ||T^{-1}|| < 1+\epsilon$.
\end{enumerate}
\end{definition}
 PLR is an effective tool in Banach space theory. For example  Oja and Silja in \cite{Oja}  investigated versions of the principle of local reflexivity for nets of subspaces of a Banach space and gave some applications to  duality and lifting theorems.

\begin{lemma} [Lemma $1$ in \cite{Runde}]
Let $X$ be a Banach space and let $T \in \mathcal{L}(X)$. Then $T \in \mathcal{K}(X)$ if and only if, for each $\epsilon >0$, there is a finite-dimensional subspace $F_{\epsilon}$ of $X$ such that $||Q_{F_{\epsilon}}T|| < \epsilon$, where $Q_{F_{\epsilon}}: X \rightarrow X/F_{\epsilon}$ is the canonical projection. 

\end{lemma}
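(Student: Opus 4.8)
The plan is to translate the statement into the language of total boundedness, relying on the standard fact that in a Banach space $X$ an operator $T \in \Lin(X)$ is compact precisely when $T(B_X)$ is totally bounded, i.e.\ admits a finite $\epsilon$-net for every $\epsilon>0$. The bridge to the quotient maps $Q_F$ is the observation that the quotient norm measures distance to the subspace: for a closed subspace $F\subset X$ one has $\|Q_F T\| = \sup_{x\in B_X}\operatorname{dist}(Tx,F)$, since $\|Q_F Tx\|_{X/F}=\inf_{f\in F}\|Tx-f\|$. Thus the hypothesis $\|Q_{F_\epsilon}T\|<\epsilon$ says exactly that $T(B_X)$ lies uniformly within $\epsilon$ of the finite-dimensional subspace $F_\epsilon$, and both implications then follow by playing total boundedness against this distance reformulation.

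For the forward direction I would assume $T\in\mathcal{K}(X)$ and fix $\epsilon>0$. Total boundedness of $T(B_X)$ yields finitely many points $y_1,\dots,y_m$ with $T(B_X)\subset\bigcup_i B(y_i,\epsilon/2)$. Setting $F_\epsilon:=\operatorname{span}\{y_1,\dots,y_m\}$, which is finite-dimensional, each $x\in B_X$ has a nearest net point $y_i$ that already lies in $F_\epsilon$, so $\|Q_{F_\epsilon}Tx\|\le\|Tx-y_i\|<\epsilon/2$. Taking the supremum over $B_X$ gives $\|Q_{F_\epsilon}T\|\le\epsilon/2<\epsilon$, as required.

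For the reverse direction I would assume the quotient condition and prove that $T(B_X)$ is totally bounded, whence its closure is compact because $X$ is complete. Given $\epsilon>0$, pick the finite-dimensional $F:=F_{\epsilon/2}$ with $\|Q_F T\|<\epsilon/2$; then for every $x\in B_X$ there is some $f_x\in F$ with $\|Tx-f_x\|<\epsilon/2$, and consequently $\|f_x\|\le\|T\|+\epsilon/2$. The crucial point — and the only place where finite dimensionality is used essentially — is that the bounded slice $\{f\in F:\|f\|\le\|T\|+\epsilon/2\}$ is relatively compact by the Riesz/Heine--Borel property of finite-dimensional spaces, hence totally bounded, so it admits a finite $\epsilon/2$-net $f_1,\dots,f_k$. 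A single triangle-inequality estimate $\|Tx-f_j\|\le\|Tx-f_x\|+\|f_x-f_j\|<\epsilon/2+\epsilon/2=\epsilon$ then exhibits $\{f_1,\dots,f_k\}$ as a finite $\epsilon$-net for $T(B_X)$. I expect no serious obstacle in the argument itself; the main subtlety is that the reverse implication genuinely needs the \emph{local compactness} of the finite-dimensional subspace (not merely its boundedness), together with careful bookkeeping of the two halves of $\epsilon$ so that the two approximations combine to close the estimate.
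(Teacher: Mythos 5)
Your proof is correct and complete: the identification $\|Q_F T\| = \sup_{x \in B_X} \operatorname{dist}(Tx,F)$, the span-of-the-net argument for the forward direction, and the use of local compactness of the bounded slice of $F_{\epsilon/2}$ to produce a finite net for $T(B_X)$ in the reverse direction all hold up. The paper itself does not prove this lemma---it cites it as Lemma 1 of Runde's note---and your argument is essentially the same standard total-boundedness proof given there, so there is nothing further to reconcile.
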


\begin{theorem} 
Let $T \in \mathcal{K}(X)$. Then $a_{n} (T) = a_{n} (T^{*})$ for all $n$. 

\end{theorem}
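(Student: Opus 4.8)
The plan is to prove the two inequalities $a_n(T^*) \le a_n(T)$ and $a_n(T) \le a_n(T^*)$ separately. The first is immediate and uses no compactness: given any $A$ with $\mathrm{rank}(A) < n$ and $\|T - A\|$ within $\epsilon$ of $a_n(T)$, its adjoint $A^*$ has the same rank and $\|T^* - A^*\| = \|(T-A)^*\| = \|T - A\|$, so $a_n(T^*) \le a_n(T) + \epsilon$; letting $\epsilon \to 0$ gives the claim. All the real work is in the reverse inequality, and this is exactly where Runde's compactness criterion and the PLR enter (and where the noncompact example $\ell_1 \to c_0$ must break down).

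For $a_n(T) \le a_n(T^*)$ I would fix $\epsilon > 0$ and choose a finite-rank $B \colon X^* \to X^*$ with $\mathrm{rank}(B) < n$ and $\|T^* - B\| \le a_n(T^*) + \epsilon$. Writing $B = \sum_{i} \phi_i \otimes x_i^*$ with $\phi_i \in X^{**}$, $x_i^* \in X^*$, I pass to $B^* \colon X^{**} \to X^{**}$, which satisfies $B^* x = \sum_i \langle x, x_i^*\rangle \phi_i$ for $x \in X$ and $\|T^{**} - B^*\| = \|T^* - B\|$. Since $T^{**}$ extends $T$, i.e.\ $T^{**}\iota = \iota T$ with $\iota \colon X \to X^{**}$ the canonical isometric embedding, one gets $\|B^* x - \iota T x\| \le (a_n(T^*)+\epsilon)\|x\|$ for every $x$. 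The goal is then to ``round'' the finite-dimensional range $R := \mathrm{span}\{\phi_i\} \subseteq X^{**}$ back into $X$ and so produce an honest finite-rank $A \colon X \to X$, of rank $< n$, approximating $T$.

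Compactness enters through Runde's Lemma: for a prescribed $\delta > 0$ it supplies a finite-dimensional $F_\delta \subseteq X$ with $\|Q_{F_\delta} T\| < \delta$, so every $Tx$ with $\|x\|\le 1$ lies within $\delta$ of $F_\delta$. I apply the PLR to the finite-dimensional subspace $E := R + \iota(F_\delta)$ of $X^{**}$ and some auxiliary $\epsilon' > 0$, obtaining a one-to-one $u \colon E \to X$ with $\|u\| \le 1 + \epsilon'$ and, crucially, $u(\iota f) = f$ for all $f \in F_\delta$ by the first PLR property, since $\iota(F_\delta) \subseteq \iota(X) \cap E$. Setting $A := u \circ (B^*|_X)$ yields a finite-rank operator with $\mathrm{rank}(A) \le \mathrm{rank}(B) < n$.

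The decisive estimate is carried out directly in the norm of $X^{**}$, routing through the pinned subspace rather than comparing $u(B^*x)$ with $B^*x$. For $x \in B_X$ pick $f_x \in F_\delta$ with $\|Tx - f_x\| < \delta$; then, using $u(\iota f_x) = f_x$, the bound $\|u\| \le 1+\epsilon'$, and the distance estimate above,
\[
\|Ax - Tx\| = \|u(B^*x) - \iota Tx\| \le (1+\epsilon')\bigl(\|B^*x - \iota Tx\| + \|\iota Tx - \iota f_x\|\bigr) + \|f_x - Tx\| \le (1+\epsilon')\bigl((a_n(T^*)+\epsilon) + \delta\bigr) + \delta.
\]
Taking the supremum over $B_X$ and then letting $\epsilon',\delta \to 0$ gives $\|T - A\| \le a_n(T^*) + \epsilon$, whence $a_n(T) \le a_n(T^*) + \epsilon$, and $\epsilon \to 0$ finishes the proof. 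The main obstacle, and the point I expect to require the most care, is precisely that the PLR near-isometry $u$ does \emph{not} make $\|u(e)-e\|$ small on $R$, so a naive comparison of $u(B^*x)$ with $B^*x$ is hopeless; the resolution is to enlarge $E$ by a finite-dimensional subspace lying near the (relatively compact) set $T(B_X)$, which is exactly what Runde's criterion provides and which the PLR then pins \emph{exactly}. This is the sole place compactness is indispensable, matching the failure for the noncompact $\ell_1 \to c_0$ example.
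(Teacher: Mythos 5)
Your proof is correct, and it rests on the same two pillars as the paper's own argument---Runde's lemma and the PLR---but the plumbing is genuinely different, and in fact tighter. The paper proves $a_n(T)\le a_n(T^{**})$ and closes the loop via $a_n(T^{**})\le a_n(T^{*})\le a_n(T)$: it approximates $T^{**}$ by a finite-rank $A$ on $X^{**}$, invokes Schauder's theorem so that Runde's lemma can be applied to $T^{**}$ and $T^{*}$, and---because the range of $A$ need not lie in the PLR domain $E_\epsilon$---must extend the PLR map $S\colon E_\epsilon\to X$ to an operator $\overline{S}\colon X^{**}\to X$ (justified only by an appeal to Hahn--Banach), while also tacitly assuming that the points $z_j=Tx_j$ of the covering lie in $E_\epsilon$ (so that $\overline{S}z_j=z_j$) and that $\|\overline{S}\|\le 1+\epsilon$, a norm control that Hahn--Banach-type extension of an operator does not supply. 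You avoid all three of these soft spots: approximating $T^{*}$ by $B$ and restricting $B^{*}$ to $X$ puts the relevant range inside a fixed finite-dimensional $R\subset X^{**}$, so a single application of the PLR to $E=R+\iota(F_\delta)$ suffices and no extension is ever needed; Runde's lemma is applied only to $T$ itself, so Schauder's theorem is not an input at all; and the pinning $u(\iota f)=f$ holds by construction since $\iota(F_\delta)\subseteq\iota(X)\cap E$. What the paper's route buys is the explicit intermediate inequality $a_n(T)\le a_n(T^{**})$; what yours buys is an argument in which every vector that $u$ acts on genuinely lies in its domain and every norm bound used is actually available. Two small repairs for a final write-up: the equality $\|Ax-Tx\|=\|u(B^{*}x)-\iota Tx\|$ conflates $X$ with $\iota(X)$ (the estimate you actually run goes through $\|u(B^{*}x)-u(\iota f_x)\|+\|f_x-Tx\|$, which is fine), and the PLR as stated gives $\|u\|\,\|u^{-1}\|<1+\epsilon'$ rather than $\|u\|\le 1+\epsilon'$ directly---but since $u$ fixes $\iota(F_\delta)$ pointwise, $\|u^{-1}\|\ge 1$ whenever $F_\delta\neq\{0\}$ (the only nontrivial case, since otherwise $\|T\|<\delta$), so $\|u\|<1+\epsilon'$ follows.
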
 
\begin{proof}
Since one always has $a_n(T^*)\leq a_n(T)$, if we have $a_n(T)\leq a_n(T^{**})$, then $a_n(T^{**})\leq a_n(T^{*})$ would imply $a_n(T)\leq a_n(T^{*})$. 
Thus we must verify $a_n(T)\leq a_n(T^{**})$. 
To this end, suppose $T \in \mathcal{K}(X)$, by Schauder's theorem, $T^{*}$ and $T^{**}$ are compact. Let $\epsilon > 0$, then by definition, there exists $A \in \mathcal{F}_n(X^{**})$ such that $||T^{**} - A|| < a_{n} (T^{**}) + \epsilon$. 

By Lemma 2.2,  there are finite-dimensional subspaces $E_{\epsilon}$ of $X^{**}$ and $F_{\epsilon}$ of $X^{*}$ such that $||Q_{E_{\epsilon}}T^{**}|| < \epsilon$, where $Q_{E_{\epsilon}}: X^{**} \rightarrow X^{**}/E_{\epsilon}$ and  $||Q_{F_{\epsilon}}T^{*}|| < \epsilon$, where $Q_{F_{\epsilon}}: X^{*} \rightarrow X^{*}/F_{\epsilon}$.

By the Principle of Local Reflexivity (PLR), there exists a one-to-one linear operator $S: E_{\epsilon} \rightarrow X$ such that $||S||||S^{-1}|| < 1 + \epsilon$, $y^{*}(Sx^{**}) = x^{**}(y^{*})$ for all $x^{**} \in E_{\epsilon}$ and all $y^{*} \in F_{\epsilon}$, and $S_{| E_{\epsilon} \cap X} = I$.

Let $J: X \to X^{**}$ be the canonical map. By the Hahn-Banach theorem, since $E_{\epsilon} $ is a subspace of $X^{**}$, $S: E_{\epsilon} \rightarrow X$ can be extended to a linear operator $\overline{S}: X^{**} \rightarrow X$. 

We now have $T \in \mathcal{L}(X)$ and $\overline{S}AJ \in \mathcal{L}(X)$ and $\text{rank }(\overline{S}AJ ) = \text{rank} (A) < n$, and therefore $$a_{n} (T) \leq ||T - \overline{S}AJ ||.$$

To get an upper bound for $||T- \overline{S}AJ||$  we estimate $||Tx - \overline{S}AJ(x)||$ for $x \in B_{X}$ using an appropriate element $z_{j}$ of the covering of the set $T(B_{X})$.

Indeed, the compactness of $T$ implies that $T(B_{X})$ is relatively compact so that one can extract a finite-dimensional subset $Y_{\epsilon} \subset T(B_{X}) \subset X$ and let $z_{j} = Tx_{j}$ be the n elements forming a basis. 

Let $x \in B_{X}$. Then we have 

  \begin{flalign*} 
    a_{n} (T) & \leq |Tx - \overline{S}AJ(x)||  \leq ||Tx - z_{j} || + ||z_{j} - \overline{S}AJ(x)||&\\ 
                           &\leq \epsilon + ||z_{j} - \overline{S}AJ(x)|| = \epsilon + ||\overline{ S} z_{j} -  \overline{S}AJ(x)|| &\\
                           & \leq \epsilon + (1 + \epsilon) ||z_{j} - AJ(x) || <  \epsilon + (1 + \epsilon) (a_{n} (T^{*}) + \epsilon)
     \end{flalign*} 
since 
 \begin{flalign*} 
||z_{j} - AJ(x) || &= ||Jz_{j} - AJ(x)|| \leq ||Jz_{j}  - JTx|| + ||JTx -  AJ(x)||&\\
                       & \leq \epsilon + ||JTx - AJx||
                          = \epsilon + ||T^{**} Jx - AJx|| \leq ||T^{**} - A||&\\ 
                         & < a_{n} (T^{*}) + \epsilon.&
  \end{flalign*} 

It follows that  $a_{n} (T) \leq a_{n} (T^{**})$, as  promised. 
\end{proof} 

\begin{theorem} \label{kol}
If $T \in \mathcal{L}(X,Y)$, where X and Y are arbitrary Banach spaces with metric lifting and extension property, respectively, then $\delta_{n}(T^{*}) = \delta_{n}(T)$ for all $n$. 
\end{theorem}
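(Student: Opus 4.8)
The plan is to compare $\delta_n(T^*)$ with $\delta_n(T)$ indirectly, routing through the Gelfand and approximation numbers of $T$ itself and invoking the hypotheses on $X$ and $Y$ only at the final step. The backbone is the duality between Kolmogorov and Gelfand numbers,
\begin{equation}\label{dual}
\delta_n(T^*)=c_n(T),
\end{equation}
which holds for every $T\in\mathcal{L}(X,Y)$ with no restriction on the spaces. Granting \eqref{dual}, the theorem is immediate: since $Y$ has the metric extension property, $c_n(T)=a_n(T)$ by \cite{CS}, Prop. 2.3.3; since $X$ has the metric lifting property, $a_n(T)=\delta_n(T)$ by \cite{CS}, Prop. 2.2.3; and therefore
$$\delta_n(T^*)=c_n(T)=a_n(T)=\delta_n(T).$$

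To prove \eqref{dual} I would start from the subspace form of the Gelfand number, $c_n(T)=\inf\{\|TJ_M\|:M\subseteq X\text{ closed},\ \operatorname{codim}M<n\}$, where $J_M:M\hookrightarrow X$ is the inclusion. For a fixed such $M$, dualizing the operator norm gives
$$\|TJ_M\|=\sup_{\|x\|\le 1,\,x\in M}\ \sup_{y^*\in B_{Y^*}}|\langle x,T^*y^*\rangle|=\sup_{y^*\in B_{Y^*}}\|(T^*y^*)|_M\|,$$
and the Hahn--Banach theorem identifies the restriction norm with a distance, $\|f|_M\|=\operatorname{dist}(f,M^\perp)$ for $f\in X^*$. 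Since $\operatorname{dist}(T^*y^*,M^\perp)=\|Q_{M^\perp}T^*y^*\|$, this says precisely that $\|TJ_M\|=\|Q_{M^\perp}T^*\|$.

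The second ingredient is that $M\mapsto M^\perp$ is a bijection from the closed subspaces of $X$ of codimension $<n$ onto the finite-dimensional subspaces $G\subseteq X^*$ with $\dim G<n$, with inverse $G\mapsto G_\perp$; here one uses that a finite-dimensional subspace of a dual space is weak-$*$ closed, so that $(G_\perp)^\perp=G$ and $\dim G=\operatorname{codim}G_\perp$. Taking infima over the two corresponding families then turns the identity $\|TJ_M\|=\|Q_{M^\perp}T^*\|$ into $c_n(T)=\delta_n(T^*)$, once the index conventions of the definitions above are matched.

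The main obstacle is \eqref{dual} itself; everything else is a formal substitution once the two propositions of \cite{CS} are in hand. Inside \eqref{dual} the two points that need care are the Hahn--Banach identification $\|f|_M\|=\operatorname{dist}(f,M^\perp)$ and the claim that the annihilator correspondence is genuinely a dimension-preserving bijection between the two families, so that the infima coincide rather than merely dominate one another. If one prefers not to reprove \eqref{dual}, it may be cited directly as the standard Gelfand--Kolmogorov duality from \cite{CS} or from Pietsch's theory of $s$-numbers, after which the three-term chain closes the argument.
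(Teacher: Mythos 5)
Your proof is correct and follows essentially the same route as the paper: the identical three-term chain $\delta_n(T^*)=c_n(T)=a_n(T)=\delta_n(T)$, combining the Gelfand--Kolmogorov duality with \cite{CS}, Prop.~2.2.3 (lifting) and Prop.~2.3.3 (extension). The only difference is that you sketch a (correct) proof of the duality $\delta_n(T^*)=c_n(T)$ via the annihilator correspondence, whereas the paper simply cites it as \cite{CS}, Prop.~2.5.5.
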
 
\begin{proof}
It is  known that if $T \in \mathcal{L}(X,Y)$, where X and Y are arbitrary Banach spaces, then $\delta_{n}(T^{*}) = c_{n}(T)$ ( \cite{CS}, Prop. $2.5.5$).
 We also know that  if $T \in \mathcal{L}(X,Y)$, where X and Y are arbitrary Banach spaces with metric lifting and extension property, respectively, then $\delta_{n} (T) = a_{n}(T) = c_{n} (T)$. 
Hence,  $$\delta_{n}(T^{*}) = c_{n}(T) = a_{n}(T) = \delta_{n}(T).\qedhere$$ 
\end{proof}
\begin{remark}
As stated before,  Astala in \cite{As} proved that if $T \in \mathcal{L}(X,Y)$, where X and Y are arbitrary Banach spaces with metric lifting and extension property, respectively, then $\gamma (T) = \gamma (T^{*})$, where $\gamma (T)$ denotes the measure of non-compactness of $T$. In \cite{Ak-Al}, it is shown that $\displaystyle \lim_{n \to \infty} \delta_{n}(T) = \gamma (T)$. This relationship between Kolmogorov diameters and the measure of non-compactness together with Theorem \ref{kol} provide an alternative proof for the result of Astala. 
\end{remark}
\begin{theorem}
If $T \in \mathcal{K}(X,Y)$, where X and Y are arbitrary Banach spaces with metric lifting and extension property, respectively, then $c_{n}(T^{*}) = c_{n}(T)$ for all $n$. 
\end{theorem} 
\begin{proof}
If $T \in \mathcal{K}(X,Y)$, then it is known that $\delta_{n}(T) = c_{n}(T^{*})$ (\cite{CS}, Prop. $2.5.6$). If X and Y are Banach spaces with metric lifting and extension property, respectively, then we  also have $\delta_{n} (T) = a_{n}(T) = c_{n} (T)$. 
Thus, $c_{n}(T^{*}) = c_{n}(T)$ for all $n$. 
\end{proof}
\begin{remark}
 In \cite{MS}
it is shown that if $X$ has the lifting property, then $X^{*}$ has the extension property. However, if $Y$ has the extension property, then $Y^{*}$ has the lifting property if and only if $Y$ is finite-dimensional. 
Therefore one can observe that if $X$ has the lifting property and $Y$ is finite-dimensional with the extension property, then  $Y^{*}$ has the lifting property and $X^{*}$ has the extension property, so that we have $$\delta_{n} (T^{*}) = a_{n}(T^{*}) = c_{n} (T^{*}).$$

\end{remark}


\section{Compactness with Approximation schemes} 
Approximation schemes were introduced in Banach space theory by Butzer and Scherer in $1968$ \cite{But} and independently by Y. Brudnyi and N. Kruglyak under the name of ``approximation families''  in \cite{BK}. They were
popularized by Pietsch in his 1981 paper \cite{Pi}, for  later developments we refer the reader to  \cite{ Ak-Al, AA,  AL}. The following definition is due to Aksoy and generalizes the classical concept of approximation scheme in a way that allows using families of subsets of $X$  instead of elements of $X$, which is useful when we deal  with n-widths.

\begin{definition}[Generalized Approximation Scheme]
Let $X$ be a Banach space.  For each $n\in \mathbb{N}$, let $Q_n=Q_n(X)$ be a family of subsets of $X$ satisfying the following conditions:
\begin{itemize}
\item[$(GA1)$] $\{0\}=Q_0\subset Q_1\subset \cdots \subset Q_n \subset \dots$.
\item[$(GA2)$]  $\lambda Q_n \subset Q_n$ for all $n \in N$ and all scalars $\lambda$.
\item[$(GA3)$]  $Q_n+Q_m \subseteq Q_{n+m}$ for every $n,m \in N$.
\end{itemize}
Then $Q(X)=(Q_n(X))_{n \in N}$ is called a \emph{generalized approximation scheme} on $X$.  We shall simply use $Q_n$ to denote $Q_n(X)$ if the context is clear.
\end{definition}
 We use here the term ``generalized'' because the elements of $Q_n$ may be subsets of $X$. Let us now give a few important examples of generalized approximation schemes.
\begin{example}

\
\begin{enumerate}
\item  $Q_n=$ the set of all at-most-$n$-dimensional subspaces of any given Banach space $X$.
\item Let $E$ be a Banach space and $X=L(E)$; let $Q_n=N_n(E)$, where $N_n(E)=$ the set of all $n$-nuclear maps on $E$ \cite {PieID}.
\item Let $a^k=(a_n)^{1+\frac{1}{k}},$ where $(a_n)$ is a nuclear exponent sequence. Then {$Q_n$}  on $X=L(E)$ can be defined as the set of all $\Lambda_\infty (a^k)$-nuclear maps on $E$ \cite{Dubinsky_Ram}.
\end{enumerate}
\end{example}
\begin{definition}[Generalized Kolmogorov Number]
Let $B_X$ be the closed unit ball of $X$,  $Q= Q(X)=(Q_n(X))_{n \in N}$ be a \emph{generalized approximation scheme} on $X$,  and $D$ be a bounded subset of $X$.  Then the $n^{\text{th}}$ \emph{generalized Kolmogorov number} $\delta_n(D;Q)$ of $D$ with respect to $Q$ is defined by
\begin{equation}
\label{GenKolmogorovNumber}
\delta_n(D;Q)=\inf\{r>0:D \subset rB_X+A \text{ for some }A \in Q_n(X)\}.
\end{equation}
Assume that $Y$ is a Banach space and $T \in \mathcal{L}(Y,X)$. The $n^{\text{th}}$ Kolmogorov number $\delta_n(T;Q)$ of $T$ is defined as $\delta_n(T(B_Y);Q)$.
\end{definition}
It follows that $\delta_n(T;Q)$ forms a non-increasing sequence of non-negative numbers:
\begin{equation}
\|T\|=\delta_0(T;Q)\geq \delta_1(T;Q)\geq \cdots \geq \delta_n(T;Q)\geq 0.
\end{equation}
We are now able to introduce $Q$-compact sets and operators:

\begin{definition}[$Q$-compact set]
Let $D$ be a bounded subset of $X$. We say that $D$ is $Q$-\emph{compact} if $\displaystyle\lim_n \delta_n(D;Q)=0$.
\end{definition}

\begin{definition}[$Q$-compact map] We say that $T\in L(Y,X)$ is a $Q$-\emph{compact map} if  $T(B_Y)$   is a  $Q$-compact set,
$$\displaystyle\lim_n \delta_n(T;Q)=0 .$$
\end{definition}
 $Q$-compact maps  are a genuine generalization of compact maps since there are examples of  $Q$-compact maps which are not compact in the usual sense. 

 In the following we present two examples of Q-compact maps which are not compact. First of this examples is known (see \cite{Ak-Al}) and it involves a projection  $P: L_p[0,1] \to R_p$ where $R_p$ denotes the closure of the span of the space of Rademacher functions.  Second example is new and illustrates the fact that if  $B_w$ is a weighted backward shift on $c_0(\N)$ with $w=(w_n)_n$ a bounded sequence not converging to 0, then  $B_w$ is $Q$-compact operator which is not compact.

\begin{example} 
Let $\{r_n(t)\}$ be the space spanned by the Rademacher functions.  It can be seen from the Khinchin inequality \cite{Lin} that
\begin{equation}
\ell_2 \approx  \{r_n(t)\}\subset L_p[0,1] \text{ for all }1\leq p \leq \infty.
\end{equation}
We define an approximation scheme $A_n$ on $L_p[0,1]$ as follows:
\begin{equation}
 A_n=L_{p+\frac{1}{n}}.
\end{equation}
$L_{p+\frac{1}{n}}\subset L_{p+\frac{1}{n+1}}$ gives us $A_n\subset A_{n+1}$. for $n=1,2,\dots,$ and it is easily seen that $A_n+A_m \subset A_{n+m}$ for $n,m=1,2,\dots,$ and that $\lambda A_n \subset A_n$ for all $\lambda$.  Thus $\{A_n\}$ is an approximation scheme.

It can be shown  that for $p\geq 2$ the projection $P: L_p[0,1] \to R_p$ is a non-compact  $Q$-compact map,
where $R_p$ denotes the closure of the span of $\{r_n(t)\}$ in $L_p[0,1]$.  (See \cite{Ak-Al} for details)

\end{example}


Next we give another example is an $Q$-operator which is not compact .


\begin{example}
Consider the weighted  backward shift $$ B(x_1,x_2, x_3, \dots)= (w_2x_2, w_3x_3, w_4 x_4, \dots)$$ where $w=(w_n)_n$ is a sequence of non-zero scalars called a \emph{weight sequence}.
Any weighted shift is a linear operator and is bounded if and only if $w$ is a bounded sequence.

\end{example}

Let $w=(w_n)_n$ be a bounded sequence of positive real numbers. The unilateral weighted shift on $c_0(\N)$ is defined by $$B_w(e_1)=0 \quad \mbox{ and } \quad B_w(e_n)=w_ne_{n-1}\quad \mbox{ for all } \quad n\ge 2.$$ 
\begin{proposition}
 Suppose  the approximation scheme $Q=(A_n)_{n=1}^{\infty}$ of $c_0(\N)$ is  defined as  $A_n=\ell_n(\N)$   for  all $n$. Then any bounded weighted shift on $c_0$ is $Q$-compact
\end{proposition}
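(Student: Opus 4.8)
The plan is to verify the defining condition for $Q$-compactness directly, namely to show that $\lim_n \delta_n(B_w;Q)=0$, where by definition $\delta_n(B_w;Q)=\delta_n(B_w(B_{c_0});Q)$. First I would record that since $w=(w_n)_n$ is bounded, $B_w$ is a bounded operator with $\|B_w\|\le \sup_n|w_n|$, so that $D:=B_w(B_{c_0})$ is a bounded subset of $c_0(\N)$; indeed $D\subseteq \|B_w\|\,B_{c_0}$. Thus the whole question reduces to estimating the generalized Kolmogorov numbers of a fixed bounded subset of $c_0(\N)$ against the scheme $A_n=\ell_n(\N)$.

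The key structural observation concerns the scheme itself rather than the shift. Each member $A_n=\ell_n(\N)$ is the space of $n$th power summable sequences over $\N$, and therefore contains every finitely supported sequence. Since the finitely supported sequences are norm dense in $c_0(\N)$, each $A_n$ is a dense (infinite dimensional) subspace of $c_0(\N)$. This is precisely the feature that makes the scheme genuinely ``generalized'': the approximating sets are not finite dimensional, so we are free to absorb entire tails into a single $A_n$.

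With this in hand the covering required by the definition of $\delta_n$ is immediate. Fix $n\ge 1$ and $r>0$. Given $y\in D\subseteq c_0(\N)$, the truncation $y^{(m)}$ of $y$ to its first $m$ coordinates is finitely supported, hence lies in $A_n$, and $\|y-y^{(m)}\|_\infty=\sup_{k>m}|y_k|\to 0$ as $m\to\infty$ because $y\in c_0$; choosing $m$ large enough yields $y\in rB_{c_0}+A_n$. Since the \emph{single} set $A_n$ serves every $y\in D$, we obtain $D\subseteq rB_{c_0}+A_n$ for every $r>0$, and hence $\delta_n(B_w;Q)=\delta_n(D;Q)=0$ for all $n\ge 1$. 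In particular $\lim_n\delta_n(B_w;Q)=0$, so $B_w$ is $Q$-compact.

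I expect the only real subtlety — and the step where a naive approach goes wrong — to be the temptation to seek a \emph{uniform} finite rank approximation of $D$, that is, to try to control $\sup_{\|x\|\le 1}\sup_{k>m}|w_{k+1}x_{k+1}|$. This quantity does not tend to $0$ when $w\not\to 0$, and that failure is exactly why $B_w$ is not compact in the usual sense. The point of the argument is that $Q$-compactness against this scheme does not demand uniform tail decay: because each $A_n$ is itself dense in $c_0(\N)$, one fixed member of the scheme simultaneously approximates every point of $D$, so the covering in the definition of $\delta_n$ is realised with no uniformity in the weights. The care needed is to phrase the covering in terms of a single fixed $A_n$, as the definition requires, rather than point by point; it is this observation that legitimately forces $\delta_n\equiv 0$ and exhibits $B_w$ as a $Q$-compact operator that is not compact whenever $w\not\to 0$.
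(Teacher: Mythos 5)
Your proof is correct under the paper's stated definition of the generalized Kolmogorov numbers, but it takes a genuinely different route from the paper's own proof. The paper fixes $m$ and constructs, for each $x\in U_{c_0}$, an explicit decomposition $B_w(x)=\frac{1}{2^m}y+z$ with $y\in U_{c_0}$ and $z\in\ell_1\subseteq\ell_m$, obtained by splitting the coordinates into the finite set where $\abs{x_nw_n}>2^{-m}$, a further subsequence along which $(w_nx_n)$ is absolutely summable, and the remaining indices; this yields the quantitative bound $\delta_m(B_w;Q)\le 2^{-m}$ and hence $Q$-compactness. You instead exploit a structural property of the scheme itself: each $A_n=\ell_n(\N)$ contains all finitely supported sequences and is therefore dense in $c_0(\N)$, so for every $r>0$ the single subspace $A_n$ is within distance $r$ of every point of any subset of $c_0$, giving the exact value $\delta_n(B_w;Q)=0$ for all $n$. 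Your argument is shorter, gives a sharper conclusion, and makes no use of the shift structure or of the weights beyond ensuring that $B_w$ is a well-defined bounded operator; indeed it shows that \emph{every} bounded operator from any Banach space into $c_0(\N)$ is $Q$-compact for this scheme, so the proposition is really a statement about the scheme (its members being dense subspaces) rather than about weighted shifts. That extra generality is also worth recording as a caution: it reveals that this particular scheme cannot distinguish degrees of non-compactness among bounded operators into $c_0$ at all. What the paper's longer computation buys, by contrast, is explicit coordinate-level control --- a concrete decomposition of $B_w(x)$ whose tail lies in the fixed smallest member $\ell_1$ of the scheme, together with a quantitative rate $2^{-m}$ --- information that a pure density argument cannot provide and that would become essential under any tightening of the definition (e.g.\ requiring the approximating sets to be bounded in their own norms) for which density alone no longer suffices.
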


\begin{proof}
 Let $B_w$ be any bounded and linear weighted shift on $c_0$, then $w=(w_n)_n$ is a bounded weight. Let $m\geq 1$. Consider,
\begin{align*} 
\delta_m(B_w (U_{c_0}), (A_n)_n)&=\inf\{r>0: B_w(U_{c_0})\subseteq r U_{c_0}+\ell_m\}\\
&= \inf\{r>0: \forall x\in U_{c_0}, \exists y\in U_{c_0}, \exists z\in \ell_m\mbox{  with  } B_w(x)=ry+z\}.
\end{align*}
Let $x=(x_n)_{n\geq 1}\in U_{c_0}$. Let us define $y=(y_n)_{n\geq 1}\in U_{c_0}$ and $z=(z_n)_{n\geq 1}\in \ell_1\subseteq \ell_m$ such that $B_w(x)=\frac{1}{2^m}y+z$. 

Let $A:=\{n\geq 1: 2^m\abs{x_nw_n}>1\}$. The set $A$ is finite, otherwise $(w_n)_n$ is unbounded. Set, 
\[   \left\{
\begin{array}{ll}
      x_nw_n=z_{n-1} \\
      y_{n-1}=0, & \quad \forall n\in A.\\
     \end{array} 
\right. \]
Observe that $(w_nx_n)_{n\in \N\setminus A}\in c_0$, hence there exists a subsequence $(n_k)_k$ such that $\sum_{k=1}^\infty\abs{w_{n_k}x_{n_k}}<\infty$. Set,
\[   \left\{
\begin{array}{ll}
      x_{n_k}w_{n_k}=z_{n_k-1} \\
      y_{n_k-1}=0, & \quad \forall k\geq 1.\\
     \end{array} 
\right. \]
Finally, set
\[   \left\{
\begin{array}{ll}
      2^mx_nw_n=y_{n-1} \\
      z_{n-1}=0, & \quad \forall n\in \N\setminus \{(n_k)_k\cup A\}.\\
     \end{array} 
\right. \]
 Hence, $x_nw_n=\frac{1}{2^m}y_{n-1}+z_{n-1}$, for all $n\geq 2$. In other words, $B_w(x)=\frac{1}{2^m}y+z$. Note that $y\in U_{c_0}$ and $z\in \ell_1\subset \ell_m$. In conclusion, $\delta_m(B_w( U_{c_0}),(A_n)_n)\leq \frac{1}{2^m}$. As $m$ goes to $\infty$, we obtain that $\delta_m(B_w (U_{c_0}), \ (A_n)_n)$ goes to $0$ and $B_w$ is $Q$-compact.
\end{proof}

It is well-known that $B_w$ is compact if and only if $w=(w_n)_n$ is a null-sequence.

\begin{corollary}
Let  $B_w$ be a weighted backward shift on $c_0(\N)$ with $w=(w_n)_n$ a bounded sequence not converging to 0.  Consider the approximation schemes on  $c_0(\N)$ as $Q=(A_n)_{n=1}^\infty $ with  $A_n = \ell_n(\mathbb{N})$  for all $n$. Then, $B_w$ is a non-compact $Q$-compact operator.
\end{corollary}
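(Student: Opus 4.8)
The plan is to read off the statement directly from the two facts immediately preceding it, since the Corollary is essentially their conjunction. The hypotheses supply exactly the two inputs needed: the boundedness of $w$ and its failure to be a null sequence.

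First I would note that because $w=(w_n)_n$ is bounded, the weighted backward shift $B_w$ is a bounded linear operator on $c_0(\mathbb{N})$; this is the boundedness criterion recorded in the Example above (a weighted shift is bounded precisely when its weight sequence is bounded). Hence $B_w$ satisfies the hypothesis of the preceding Proposition, whose conclusion---applied verbatim with the scheme $A_n=\ell_n(\mathbb{N})$---gives $\lim_n \delta_n(B_w;Q)=0$; that is, $B_w$ is $Q$-compact.

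Second, I would invoke the well-known characterization stated just above: $B_w$ is compact if and only if $w$ is a null sequence. Since by hypothesis $w$ does not converge to $0$, the operator $B_w$ is not compact. Combining the two observations yields that $B_w$ is a non-compact $Q$-compact operator, as claimed.

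I expect no genuine obstacle here: everything has been arranged in the Proposition and in the cited compactness characterization, so the only task is to verify that the hypotheses of the Corollary match those of the invoked results, which they do by design. The single point meriting a word of care is that it is the \emph{boundedness} of $w$---not merely its being a weight sequence---that licenses applying the Proposition, while it is the \emph{non-convergence} of $w$ to $0$ that places $B_w$ outside the compact case.
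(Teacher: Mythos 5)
Your proof is correct and follows exactly the route the paper intends: the Corollary is stated without a separate proof precisely because it is the conjunction of the preceding Proposition (boundedness of $w$ gives $Q$-compactness of $B_w$ for the scheme $A_n=\ell_n(\N)$) and the stated characterization that $B_w$ is compact if and only if $w$ is a null sequence. Your attention to which hypothesis feeds which cited result is a sound reading of the paper's implicit argument.
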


 Our next objective here is to ascertain whether or not Schauder's type of theorem is true for $Q$-compact maps. For this purpose we use  symmetrized approximation numbers of $T$. For our needs, we choose the closed unit ball $B_{Z}$ of the Banach space $Z$ as an index set $\Gamma$.  Our proof of the Schauder's theorem for Q-compact operators will depend on the fact that $\ell_{1} (B_{Z})$ has the lifting property and $\ell_{\infty} (B_{Z})$ has the extension property. First we recall the following proposition.

 \begin{proposition} [Refined version of Schauder's theorem \cite{CS}, p.$ 84$]
 An operator $T$ between arbitrary Banach spaces $X$ and $Y$ is compact if and only if $$\lim_{n\to \infty} \tau_{n}(T)  = 0 $$ and moreover, $$\tau_{n}(T) = \tau_{n}(T^{*}).$$

 \end{proposition}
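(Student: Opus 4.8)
The plan is to prove the two assertions separately, in both cases reducing everything to the three equivalent descriptions $\tau_n(T)=c_n(TQ_X)=a_n(J_YTQ_X)=\delta_n(J_YT)$ recorded in the definition, and then exploiting the duality relation $\delta_n(S^*)=c_n(S)$ (\cite{CS}, Prop. $2.5.5$) together with the facts that $\ell_1(B_X)$ has the lifting property and $\ell_\infty(B_{Y^*})$ the extension property. For the compactness characterization I would start from $\tau_n(T)=c_n(TQ_X)$, where $Q_X:\ell_1(B_X)\to X$ is the canonical metric surjection. Writing $e_x$ for the unit point mass at $x\in B_X$, one has $Q_X(e_x)=x$, so $Q_X(B_{\ell_1(B_X)})=B_X$ and hence $TQ_X(B_{\ell_1(B_X)})=T(B_X)$; therefore $TQ_X$ is compact exactly when $T$ is. Now invoke the already-recorded fact that an operator is compact precisely when its Gelfand numbers tend to $0$: thus $\tau_n(T)=c_n(TQ_X)\to 0$ iff $TQ_X$ is compact iff $T$ is compact. (One could equally argue through $\tau_n(T)=\delta_n(J_YT)$, using that $J_Y$ is an isometric embedding and that $\lim_n\delta_n$ detects compactness via $\lim_n\delta_n(\cdot)=\gamma(\cdot)$ as in the remark after Theorem \ref{kol}.)

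For the equality $\tau_n(T)=\tau_n(T^*)$, the plan is to run the chain
\[
\tau_n(T)=c_n(TQ_X)=\delta_n\big((TQ_X)^*\big)=\delta_n\big(Q_X^*\,T^*\big),
\]
where the middle equality is \cite{CS}, Prop. $2.5.5$. Since $Q_X$ is a metric surjection, its adjoint $Q_X^*:X^*\to \ell_1(B_X)^*=\ell_\infty(B_X)$ is a metric injection into a space with the extension property. It then remains to identify $\delta_n(Q_X^*T^*)$ with $\tau_n(T^*)=\delta_n(J_{X^*}T^*)$, where $J_{X^*}:X^*\to \ell_\infty(B_{X^{**}})$ is the canonical embedding. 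This is precisely the assertion that $\delta_n(\iota\,T^*)$ is independent of which metric injection $\iota$ of $X^*$ into an extension-property space one uses.

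I expect this independence statement to be the main obstacle, so I would isolate it as a lemma: \emph{if $\iota_1:Z\to W_1$ and $\iota_2:Z\to W_2$ are metric injections with $W_1,W_2$ having the metric extension property, then $\delta_n(\iota_1 S)=\delta_n(\iota_2 S)$ for every $S\in\Lin(V,Z)$.} The proof should be short. Using the extension property of $W_2$, extend $\iota_2\iota_1^{-1}$ (an isometry on the subspace $\iota_1(Z)\subset W_1$) to an operator $R:W_1\to W_2$ with $\|R\|\le 1$ and $R\iota_1=\iota_2$; symmetrically, the extension property of $W_1$ yields $R':W_2\to W_1$ with $\|R'\|\le 1$ and $R'\iota_2=\iota_1$. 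The ideal property of the Kolmogorov $s$-function then gives $\delta_n(\iota_2 S)=\delta_n(R\iota_1 S)\le \|R\|\,\delta_n(\iota_1 S)\le \delta_n(\iota_1 S)$ and, using $R'$, the reverse inequality, hence equality.

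Applying the lemma with $\iota_1=Q_X^*$ and $\iota_2=J_{X^*}$ (both metric injections of $X^*$ into extension-property spaces) gives $\delta_n(Q_X^*T^*)=\delta_n(J_{X^*}T^*)=\tau_n(T^*)$, and combined with the displayed chain this yields $\tau_n(T)=\tau_n(T^*)$. The only points I would check carefully are that metric surjectivity of $Q_X$ really dualizes to metric injectivity of $Q_X^*$ (so that $\iota_1$ is an isometric embedding), and that $\ell_\infty(B_X)$ genuinely enjoys the extension property so the lemma applies; both are standard. Notably, this route establishes $\tau_n(T)=\tau_n(T^*)$ for \emph{all} bounded $T$, not merely compact ones, since it never uses the compact-case duality $c_n(S^*)=\delta_n(S)$ and relies only on the one-sided relation of \cite{CS}, Prop. $2.5.5$.
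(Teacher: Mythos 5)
Your proposal is correct, but note first that the paper itself offers no proof of this proposition: it is recalled from \cite{CS} (p.~84), and the closest argument actually written out in the paper is its proof of the $Q$-compact analogue (``Schauder's theorem for $Q$-compact operators''). Measured against that proof, which follows the Carl--Stephani template, your route differs in its key step. The paper proves the two inequalities separately by two asymmetric chains: $\tau_n(T^*)\le\tau_n(T)$ via the factorization $Q_{Y^*}=J_Y^*K$ together with the one-sided relation $c_n(S^*)\le\delta_n(S)$, and $\tau_n(T)\le\tau_n(T^*)$ via the factorization $Q_X^*=P_0J_{X^*}$ together with the equality $\delta_n(S^*)=c_n(S)$ (\cite{CS}, Prop.~2.5.5). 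You keep only the second chain, $\tau_n(T)=c_n(TQ_X)=\delta_n(Q_X^*T^*)$, and then get equality in one stroke from your embedding-independence lemma: $\delta_n(\iota S)$ is the same for every metric injection $\iota$ of the target into a space with the metric extension property. That lemma and its proof are sound --- $\iota_1(Z)$ is closed because $Z$ is complete and $\iota_1$ isometric, the extension property yields $R$ with $\|R\|\le 1$ and $R\iota_1=\iota_2$, and the ideal property of $\delta_n$ finishes; likewise $Q_X^*$ really is a metric injection (explicitly $Q_X^*x^*=(x^*(x))_{x\in B_X}$, whose supremum norm is $\|x^*\|$) into $\ell_\infty(B_X)$, which has the metric extension property. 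Your compactness argument is also correct and more explicit than anything in the paper: $Q_X(B_{\ell_1(B_X)})=B_X$ exactly (point masses give surjectivity onto $B_X$), so $TQ_X$ is compact iff $T$ is, and then the Gelfand-number criterion applies. As to what each approach buys: yours isolates a reusable, symmetric principle which in particular shows that $\tau_n(T)=\delta_n(J_YT)$ does not depend on the choice of injective embedding (a point the paper leaves implicit), and it visibly yields $\tau_n(T)=\tau_n(T^*)$ for \emph{all} bounded $T$; the paper's two-chain version, on the other hand, is phrased entirely through the factorizations and the inequalities among $c_n$ and $\delta_n$, which is exactly the form the authors later decorate with approximation schemes (their Remark, parts (a)--(d)) to prove the $Q$-compact theorem --- your lemma would need a scheme-compatible reformulation before it could play that role.
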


 Motivated by this, we give the definition of Q-compact operators using the symmetrized approximation numbers. 
  
  \begin{definition} 
  We say $T$ is Q-symmetric compact if and only if $$\lim_{n \to \infty} \tau_{n} (T, Q) = 0.$$
   \end{definition} 
   \begin{remark} \label{rem1}
   
   We need the following simple facts for our proof, for details we refer the reader to \cite{CS} Prop. $2.5.4-6$.

    \begin{enumerate}
 
 \item[a)]  Recall that  $\tau_{n}(T, Q) = c_{n} (TQ_{X}, Q)$, where $Q_{X}: \ell_{1}(B_{X}) \rightarrow X$. 
 
 \item [b)] We will also abbreviate the canonical embedding $$K_{\ell_{1}(B_{Y^{*}})}: \ell_{1} (B_{Y^{*}}) \rightarrow \ell_{\infty} (B_{Y^{*}})^{*} $$ by $K$ so that $Q_{Y^{*}} = J^{*}_{Y}K$. 
 
 \item[ c)] Denote by $P_{0}: \ell_{\infty} (B_{X^{**}} ) \rightarrow \ell_{\infty} (B_{X})$ the operator which restricts any bounded function on $B_{X^{**}}$ to the subset $K_{X}(B_{X}) \subset B_{X^{**}} $ so that $Q^{*}_{X} = P_{0} J_{X^{*}}$. 
 
 \item [d)]  The relations (b) and (c) are crucial facts for the estimates of $\delta_{n}(T^{*}, Q^{*})$ and $c_{n}(T^{*}, Q^{*})$. In particular, we have $c_{n}(T^{*}, Q^{*}) \leq \delta_{n}(T, Q)$. 
\end{enumerate} 
 \end{remark}

  We now state and prove 
   the following theorem which states that  the degree of Q-compactness of $T$ and $T^*$ is the same in so far as it is measured by the symmetrized approximation numbers $\tau_{n}$. 
  
  \begin{theorem} [Schauder's theorem for Q-compact operators]
Let $T\in \mathcal{L}(X,Y)$ with $X, Y$ are arbitrary Banach spaces, and let $Q=(Q_n(X))$ be a generalized approximation scheme on $X$. Then 
 $$\tau_{n} (T^{*}, Q^{*})  = \tau_{n} (T, Q) $$
 for all $n$. 
  \end{theorem}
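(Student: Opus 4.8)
The plan is to transcribe the classical proof of the refined Schauder theorem (the Proposition preceding the statement, from \cite{CS}, p.~84) into the generalized $Q$-setting, replacing the ordinary Gelfand and Kolmogorov numbers by their $Q$- and $Q^{*}$-versions and feeding in the factorization identities collected in Remark \ref{rem1}. Since $\tau_{n}(\cdot,Q)$ is monotone and nonnegative, it suffices to establish the two inequalities $\tau_{n}(T^{*},Q^{*})\le \tau_{n}(T,Q)$ and $\tau_{n}(T,Q)\le \tau_{n}(T^{*},Q^{*})$ separately. Throughout I would exploit that the symmetrization inserts the metric surjection $Q_{X}\colon \ell_{1}(B_{X})\to X$ (whose source has the lifting property) and the embedding $J_{Y}\colon Y\to\ell_{\infty}(B_{Y^{*}})$ (whose target has the extension property), which is exactly what forces the identity to hold for arbitrary, and not merely compact, $T$.

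For the first inequality I would write $\tau_{n}(T^{*},Q^{*})=c_{n}(T^{*}Q_{Y^{*}},Q^{*})$ via part (a) of Remark \ref{rem1} applied to $T^{*}$, and then use part (b), namely $Q_{Y^{*}}=J_{Y}^{*}K$, to factor $T^{*}Q_{Y^{*}}=(J_{Y}T)^{*}K$. Since $K$ is a canonical norm-one embedding, the ideal property of the generalized Gelfand numbers gives $c_{n}((J_{Y}T)^{*}K,Q^{*})\le c_{n}((J_{Y}T)^{*},Q^{*})$, and part (d) of Remark \ref{rem1} (i.e.\ $c_{n}(S^{*},Q^{*})\le\delta_{n}(S,Q)$ with $S=J_{Y}T$) yields $c_{n}((J_{Y}T)^{*},Q^{*})\le\delta_{n}(J_{Y}T,Q)=\tau_{n}(T,Q)$. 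Chaining these bounds produces $\tau_{n}(T^{*},Q^{*})\le\tau_{n}(T,Q)$.

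For the reverse inequality I would start from $\tau_{n}(T,Q)=c_{n}(TQ_{X},Q)$ (part (a)) and pass to the adjoint through the generalized analogue of \cite{CS}, Prop.~2.5.5, namely $c_{n}(TQ_{X},Q)\le\delta_{n}((TQ_{X})^{*},Q^{*})=\delta_{n}(Q_{X}^{*}T^{*},Q^{*})$. Part (c), $Q_{X}^{*}=P_{0}J_{X^{*}}$ with $P_{0}$ the norm-one restriction operator, lets me factor and apply the ideal property of the generalized Kolmogorov numbers to obtain $\delta_{n}(P_{0}J_{X^{*}}T^{*},Q^{*})\le\delta_{n}(J_{X^{*}}T^{*},Q^{*})=\tau_{n}(T^{*},Q^{*})$. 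Combining the two inequalities gives the claimed equality.

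The routine-looking parts---that $K$ and $P_{0}$ have norm one and that the two symmetrized quantities equal the asserted $Q$-Gelfand and $Q^{*}$-Kolmogorov expressions---are bookkeeping. The real obstacle is establishing the generalized duality relations between $c_{n}(\cdot,Q)$ and $\delta_{n}(\cdot,Q^{*})$ under passage to the adjoint (the $Q$-versions of \cite{CS}, Prop.~2.5.4--2.5.6), together with making precise how the adjoint scheme $Q^{*}$ on the relevant dual and bidual spaces is defined so that the factorizations $Q_{Y^{*}}=J_{Y}^{*}K$ and $Q_{X}^{*}=P_{0}J_{X^{*}}$ are genuinely compatible with it. This is where the lifting property of $\ell_{1}(B_{X})$ and the extension property of $\ell_{\infty}(B_{Y^{*}})$ must be invoked to guarantee that the generalized numbers collapse onto one another, rather than merely satisfying one-sided bounds.
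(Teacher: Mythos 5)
Your proposal is correct and follows essentially the same route as the paper's own proof: the identical two chains of estimates, using Remark \ref{rem1}(a)--(b) and the factorization $T^{*}Q_{Y^{*}}=(J_{Y}T)^{*}K$ for the inequality $\tau_{n}(T^{*},Q^{*})\le\tau_{n}(T,Q)$, and Remark \ref{rem1}(c)--(d) with $Q_{X}^{*}=P_{0}J_{X^{*}}$ and the duality between $c_{n}$ and $\delta_{n}$ for the reverse. The only cosmetic difference is that you invoke the duality step $c_{n}(TQ_{X},Q)\le\delta_{n}((TQ_{X})^{*},Q^{*})$ as a one-sided bound where the paper writes an equality, which is all the argument requires; your closing caveat about rigorously defining $Q^{*}$ and proving the $Q$-versions of the duality relations applies equally to the paper's proof, which likewise cites the classical statements without reproving them in the generalized setting.
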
 
  
  \begin{proof}
  
 Let us show that  $\tau_{n} (T^{*}, Q^{*})  = \tau_{n} (T, Q)$. By Remark \ref{rem1} parts  (a) and (b) we have the following estimates: 
 \begin{flalign*}
  \tau_{n} (T^{*}, Q^{*})  &= c_{n}(T^{*}Q_{Y^{*}}, Q^{*}) = c_{n}(T^{*}J_{Y}^{*} K, Q^{*}) & \\
 &\leq c_{n}((J_{Y}T)^{*}, Q^{*}) \leq \delta_{n} (J_{Y}T, Q) = t_{n}(T, Q) &
   \end{flalign*} 
   Conversely, we have by using  Remark \ref{rem1} parts  (c) and (d): 
   \begin{flalign*}
   t_{n}(T, Q) &= c_{n}(TQ_{X}, Q) = \delta_{n}(TQ_{X})^{*}, Q^{*}) &\\
    &= \delta_{n}(Q_{X}^{*}T^{*}, Q^{*} )  = \delta_{n}(P_{0} J_{X^{*}} T^{*}, Q^{*})&\\ 
    &\leq \delta_{n}(J_{X^{*}}T^{*}, Q^{*})  = t_{n} (T^{*}, Q^{*}). &
    \end{flalign*} 
  \end{proof} 
Next we define approximation numbers with respect to a given scheme as follows:
 \begin{definition}
Given an approximation scheme $\{Q_n\}$  on $X$ and $ T \in \mathcal{L}(X)$, the n-th approximation number  $a_n(T, Q)$ with respect to this approximation scheme is defined as:
$$ a_n(T, Q)=\inf \{ ||T- B|| : \,\, B\in \mathcal{L}(X), \,
\,\, B(X) \subseteq  Q_n\}$$

\end{definition}

Let $X^*$ and $X^{**}$ be the dual and second dual of $X$.  Note that if we let $J: X \rightarrow X^{**}$ be the canonical injection and let $(X, Q_{n})$ be an approximation scheme, then $(X^{**}, J(Q_{n}))$ is an approximation scheme.

Let $\{Q_n\}$ and $\{Q^{**}_n\}:=  \{ J(Q_{n}) \}$  denote the subsets of $X$ and  $X^{**}$ respectively. 

\begin{definition}
We say $(X, Q_n)$ has the \textit{Extended Local Reflexivity Property}  (ELRP) if for each countable subset $C$ of   $X^{**}$ , for each $F\in Q_n^{**}$ for some $n$  and each $\epsilon>0$, there exists a continuous linear map 
$$P: \mbox{span} (F \cup C) \to X\quad\mbox{such that}\,\, $$
 \begin{enumerate}
\item $||P|| \leq 1+\epsilon$
\item $P \restriction_{C\cap X}= I (Identity)$
\end{enumerate}
\end{definition}
Note that ELRP is an analogue of local reflexivity principle which is possessed by all Banach spaces. 

\begin{theorem} 
Suppose $(X, Q_n)$  has ELRP and $T\in \mathcal{L}(X)$ has  separable range. Then for each $n$ we have  $a_n(T, Q)= a_n(T^*, Q^*)$.
\end{theorem}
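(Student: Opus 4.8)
The plan is to mirror the proof of Theorem 2.3, with the Extended Local Reflexivity Property (ELRP) playing the role that the Principle of Local Reflexivity played there. As in that argument, I would first record the two ``free'' inequalities coming from duality: dualizing a scheme-admissible approximant of $T$ gives the analogue of $a_n(T^*)\le a_n(T)$, namely $a_n(T^*,Q^*)\le a_n(T,Q)$, and applying this same inequality to $T^*$ (identifying $(T^*)^*=T^{**}$ and $(Q^*)^*=Q^{**}$) yields $a_n(T^{**},Q^{**})\le a_n(T^*,Q^*)$. Consequently it suffices to prove the single reverse estimate $a_n(T,Q)\le a_n(T^{**},Q^{**})$, since chaining the three inequalities
$$a_n(T,Q)\le a_n(T^{**},Q^{**})\le a_n(T^*,Q^*)\le a_n(T,Q)$$
forces equality throughout, and in particular $a_n(T,Q)=a_n(T^*,Q^*)$.

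To establish $a_n(T,Q)\le a_n(T^{**},Q^{**})$, fix $\epsilon>0$ and choose $A\in\mathcal{L}(X^{**})$ whose range lies in a member $F=J(G)$ of $Q_n^{**}=J(Q_n)$ and with $\|T^{**}-A\|<a_n(T^{**},Q^{**})+\epsilon$. This is the point at which separability of the range enters: because $T(X)$ is separable, I would select a countable set $\{z_k\}\subset T(B_X)$ that is dense in $T(B_X)$ and put $C=\{Jz_k\}$, a countable subset of $X^{**}$. Applying ELRP to this $C$, to the member $F=J(G)\in Q_n^{**}$, and to $\epsilon$, produces a continuous linear map $P:\operatorname{span}(F\cup C)\to X$ with $\|P\|\le 1+\epsilon$ and $P|_{C\cap X}=I$, so that $P(Jz_k)=z_k$ for every $k$. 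I would then set $B=PAJ\in\mathcal{L}(X)$; note that $A(J(X))\subseteq J(G)=F\subseteq\operatorname{span}(F\cup C)$, so the composition is well defined.

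The norm estimate is then completely parallel to Theorem 2.3. For $x\in B_X$ choose $z_k$ with $\|Tx-z_k\|<\epsilon$ and write $\|Tx-Bx\|\le\|Tx-z_k\|+\|P(Jz_k)-P(AJx)\|\le\epsilon+(1+\epsilon)\|Jz_k-AJx\|$. Using $JT=T^{**}J$ and the isometry of $J$, I would bound $\|Jz_k-AJx\|\le\|z_k-Tx\|+\|T^{**}Jx-AJx\|<\epsilon+\|T^{**}-A\|<2\epsilon+a_n(T^{**},Q^{**})$. Combining these and letting $\epsilon\to 0$ gives $\|T-B\|\to a_n(T^{**},Q^{**})$, hence $a_n(T,Q)\le a_n(T^{**},Q^{**})$.

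The main obstacle is the range condition: for the estimate above to bound $a_n(T,Q)$ one must know that $B=PAJ$ is admissible for the scheme $Q_n$ on $X$, i.e.\ that $B(X)$ is contained in a member of $Q_n$. Since $B(X)\subseteq P(A(J(X)))\subseteq P(J(G))$ with $G\in Q_n$, the crux is that the ELRP contraction $P$ carries the member $J(G)$ of $Q_n^{**}$ back into an admissible member of $Q_n$ on $X$. Verifying this compatibility between the map furnished by ELRP and the scheme $Q$ is exactly the delicate step, and it is what the combined hypotheses ``$(X,Q_n)$ has ELRP'' and ``$T$ has separable range'' are tailored to supply: separability makes ELRP applicable on the countable set $C$, while the defining properties of ELRP guarantee the near-isometric pushdown that keeps $P(J(G))$ inside the scheme. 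Once this admissibility is secured, the sandwich argument of the first paragraph closes the proof.
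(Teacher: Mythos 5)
Your proposal reproduces the paper's own argument essentially line for line: the same reduction via the chain $a_n(T,Q)\le a_n(T^{**},Q^{**})\le a_n(T^{*},Q^{*})\le a_n(T,Q)$, the same choice of a near-optimal $A\in\mathcal{L}(X^{**})$ with range in a member of $Q_n^{**}$, the same use of separability of $T(X)$ to produce a countable set $C=\{Jz_k\}$, the same application of ELRP to $\operatorname{span}(F\cup C)$, the same approximant $B=PAJ$ (the paper writes $PBJ$), and the same three-term norm estimate. Both you and the paper also take the ``free'' duality inequality $a_n(T^{*},Q^{*})\le a_n(T,Q)$ for granted, even though the paper never actually defines the dual scheme $Q_n^{*}$ on $X^{*}$ or checks that adjoints of admissible operators are admissible.

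The one place you go beyond the paper is your final paragraph, and there your resolution is not a proof. You correctly isolate the issue: for $\|T-PAJ\|$ to dominate $a_n(T,Q)$, the operator $PAJ$ must be scheme-admissible, i.e.\ $PAJ(X)\subseteq P(J(G))$ must lie inside a member of $Q_n$. But the claim that ``the defining properties of ELRP guarantee the near-isometric pushdown that keeps $P(J(G))$ inside the scheme'' is unsupported: ELRP, as defined in the paper, yields only $\|P\|\le 1+\epsilon$ and $P|_{C\cap X}=I$; it says nothing about where $P$ sends the member $F=J(G)$ of $Q_n^{**}$, and separability of the range is irrelevant to this question (it only supplies the countable dense set used in the estimate). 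Interestingly, the paper's own proof is silent on exactly this point---it concludes $a_n(T,Q)\le a_n(T^{**},Q^{**})$ from the norm estimate without verifying admissibility of $PBJ$. The gap is harmless when $Q_n$ is stable under bounded linear images of its members (e.g.\ the classical scheme of at-most-$n$-dimensional subspaces, since $P(J(G))$ is again a subspace of dimension at most $n$), but for a general generalized scheme (say $A_n=\ell_n(\mathbb{N})$ inside $c_0$) nothing forces $P(J(G))$ into $Q_n$. So you have in fact spotted a genuine gap that the paper overlooks; to close it one must either strengthen the definition of ELRP to require that $P$ carry members of $Q_n^{**}$ into members of $Q_n$, or restrict to schemes closed under such images. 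Asserting that ELRP already provides this does not close it, so your write-up is exactly as complete as the paper's proof---no more.
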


\begin{proof}
Since one always have $a_n(T^*, Q^*)\leq a_n(T, Q)$ we only need to verify $a_n(T, Q)\leq a_n(T^{**}, Q^{**})$. Let $J: X \to X^{**}$ be the canonical map and $U_X$ be the unit ball of $X.$ Given $\epsilon> 0$, choose $B\in \mathcal{L}(X^{**})$ such that $B(X^{**}) \in Q^{**}_n$ and
$$ ||B-T^{**}|| < \epsilon + a_n(T^{**}, Q^{**}_n).$$
Let $\{z_j\}$ be a countable dense set in $T(X)$, thus $Tx_j=z_j$ where $x_j\in X$. Consider  the set $$ K= \mbox{span} \{ (JTx_j)_1^{\infty} \cup B(X^{**})\}$$  applying  ELRP  of $X$ we obtain a map
$$ P : K \to X\,\,\mbox{such that} \,\,\,||P|| \leq 1+\epsilon \,\, \,\, \mbox{and}\,\,\, P \restriction_{(JTx_j)_1^{\infty} \cap X} =I$$

For $x\in U_X$, consider 
\begin{flalign*}
||Tx-PBJx|| &\leq ||Tx-z_j|| +||z_j- PBJx|| &\\
                   &\leq  \epsilon + || PJTx_j-PBJx||&\\
                   &\leq   \epsilon +(1+\epsilon) ||JTx_j- BJx|| &\\
                   &\leq  \epsilon +(1+\epsilon)[  ||JTx_j-JTx|| + || JTx-BJx||] &\\
                   &\leq  \epsilon +(1+\epsilon) [a_n(T^{**}, Q^{**}_n) +2 \epsilon ]
  \end{flalign*} 
  
  and thus $$ a_n(T, Q) \leq a_n(T^{**}, Q^{**}_n).\qedhere$$


\end{proof}


 \bibliographystyle{amsplain}

\begin{thebibliography}{99}

\bibitem{Ak-Al} A. G. Aksoy,  \textit{Q-Compact sets and Q-compact maps}, Math. Japon. \textbf{36} (1991), no. 1, 1-7.
\bibitem{AA} A. G.  Aksoy, J. M.  Almira, \textit{On approximation schemes and compactness}, Proceedings of the first conference on classical and functional analysis, 5-24, Azuga-Romania, (2014).
\bibitem {AL} J. M. Almira and U. Luther,\textit{Compactness and generalized approximation spaces}, Numer. Funct. Anal. and Optimiz., 23, (2002) 1-38.
\bibitem{As} K. Astala, \textit{On measures of non-compactness and ideal variations in Banach spaces}, Ann. Acad. Sci. Fenn. Ser. AI Math. Dissertations 29, (1980), 1-42.
\bibitem{But} P. L. Butzer and K. Scherer,\textit{ Approximations Prozesse und Interpolations methoden}, Biliographisches Inst. Mannheim, 1968.
\bibitem{BK} Y. Brudnyi and N. Kruglyak, \textit{ On a family of approximation spaces}, Investigation in function theory of several real variables, Yaroslavl State Univ., Yaroslavl, (1978), 15-42.
 \bibitem{CS}  B. Carl and I. Stephani, \textit{Entropy, compactness and the approximation of operators}, Cambridge University Press, 1990.

\bibitem{Dubinsky_Ram} E. Dubinsky and M. S. Ramanujan, \textit{On $\lambda$-nuclearity, } Mem. Amer. Math. Soc.,\textbf{128}, 1972.
  
 \bibitem{JRZ} W. B. Johnson, H.P. Rosenthal and M. Zippin, \textit{On bases, finite dimensional decomposition and weaker structures in Banach spaces},  Israel J. Math. 9, (1971), 488-506.
  \bibitem{MS} M.  Hasumi and G. L. Seever. \textit{The extension and the lifting properties of Banach spaces}, Proc.  Amer. Math. Soc. 15  (1964),  773-775.

 \bibitem {Hut} C. V. Hutton \textit{On approximation numbers and its adjoint}. Math. Ann. 210 (1974), 277-280.


 \bibitem{Lin} J. Lindenstrauss and L. Tzafriri, \textit{ Classical Banach Spaces I, Sequence Spaces} Springer-Verlag, Berlin, Heidelberg, New York, 1977.
\bibitem{LR} J. Lindenstrauss and  H. P. Rosenthal \textit{The $ \mathcal{L}_{p}$ spaces}, Israel J. Math. 7,  (1969), 325-340 
\bibitem{Oja} E. Oja and V. Silja, \textit{Principle of local reflexivity respecting nests of subspaces and the nest approximation properties}, J. Funct. Anal. 9, (2017), 2916-2938.

 \bibitem{PieID} A.~Pietsch, \textit{Operator ideals}, North-Holland, Amsterdam, 1980. 
  \bibitem{Pi} P. Pietsch, \textit{Approximation spaces}, J. Approx. Theory, 32 (1981), no. 2, 115-134. 
\bibitem{Pieeig} A. Pietsch, \textit{Eigenvalues and s-numbers}, Cambridge studies in advanced mathematics 13, 1985.

\bibitem{Rie} F. Riesz \textit{Uber lineare Functionalgleichungen} , Acta Mathematica, 41 (1), 71-98, 1918.
 \bibitem{Rudin} W. Rudin, \textit{Functional Analysis} (Second Edition), McGraw-Hill, Inc., 1991.
 \bibitem{Runde} V. Runde, \textit{A new and simple proof of Schauder's theorem}, arXiv:1010.1298v8,  9 Mar 2011.
\end{thebibliography}

\end{document}